\chardef\bslash=`\\ 
\def\verbatim{\interlinepenalty\@M \@verbatim
\leftskip\@totalleftmargin\advance\leftskip2pc \frenchspacing\@vobeyspaces
\@xverbatim} \makeatother \hfuzz1pc
\def\dgt@k{\dg@DX=-3 \dg@DY=2 \dg@SIZE=3} \makeatother
\def\dgt@kk{\dg@DX=3 \dg@DY=-1 \dg@SIZE=3}
\theoremstyle{plain} \newtheorem{thm}{Theorem}[section]
\newtheorem{cor}[thm]{Corollary} \newtheorem{lemma}[thm]{Lemma}
\newtheorem{prop}[thm]{Proposition}
\theoremstyle{definition} 
 \newtheorem{que}[thm]{Question}
\newcommand{\A}{\mathcal A}
\newcommand{\LL}{\mathcal L}
\numberwithin{equation}{section}
\newcommand{\aed}{{\mathrm a}{\mathrm s} {\mathrm -}{\mathrm e}{\mathrm x}
{\mathrm t} {\mathrm -}{\mathrm d}{\mathrm i}{\mathrm m}}
\newcommand{\ed}{{\mathrm e}{\mathrm x}
{\mathrm t}{\text -}{\mathrm d}{\mathrm i}{\mathrm m}}
\newcommand{\asd}{{\mathrm a}{\mathrm s} {\mathrm d}{\mathrm i}{\mathrm m}}
\newcommand{\Var}{{\mathrm V}{\mathrm a}{\mathrm r}}
\newcommand{\R}{{\mathbb R}}
\newcommand{\OO}{{\mathcal O}}
\newcommand{\N}{{\mathbb N}}
\newcommand{\diam}{{\mathrm d}{\mathrm i} {\mathrm a}{\mathrm m}\ }
\begin{document}

\title[On asymptotic extension dimension]{On asymptotic extension dimension}

\author{Du\v{s}an Repov\v{s}}
\address{Faculty of Mathematics and Physics, and Faculty of Education, 
University of Ljubljana,
Jadranska 19, 1000 Ljubljana, Slovenia} \email{dusan.repovs@guest.arnes.si}

\author{Mykhailo Zarichnyi}
\address{Department of Mechanics and Mathematics, Lviv National University,
Universytetska 1, 79000 Lviv, Ukraine} \email{mzar@litech.lviv.ua}

\subjclass[2010] {Primary: 54F45, 54C40; Secondary: 55M10}

\keywords{Asymptotic category, 
asymptotic extension dimension, 
Higson compactification, 
Higson corona, 
coarse uniform map, 
coarse CW complex, 
proper asimptotically Lipschitz map,
slowly oscillating map,
almost geodecis space,
Lipschitz neighborhood euclidean extensor, 
foliated warped cone}

\begin{abstract}{The aim of this paper  is to introduce an asymptotic
counterpart of the extension dimension defined by Dranishnikov. The
main result establishes a relation between the asymptotic
extensional dimension of a proper metric space and extension
dimension of its Higson corona.}
\end{abstract}

\date{\today}

\thanks{This research was supported by the
Slovenian Research Agency grants P1-0292-0101 and J1-2057-0101.}

\maketitle

\section{Introduction}\label{S:intro}

Asymptotic dimension defined by Gromov \cite{G} has been
an object of
study in numerous publications (see expository paper
\cite{BD1}).
A metric space $(X,d)$ is of asymptotic dimension $\le n$ (written
$\mathrm{asdim}\,X\le n$) if for every $D>0$ there exists a
uniformly bounded cover $\mathcal U$ of $X$ such that $\mathcal
U=\mathcal U^0\cup\dots\cup\mathcal U^n$, where every family
$\mathcal U^i$ is $D$-disjoint, $i=0,1,\dots,n$. Recall that a
family $\mathcal A$ of subsets of $X$ is {\em uniformly bounded}
if $$\mathrm{mesh}\,\mathcal A=\sup\{\mathrm{diam}\,A\mid
A\in\mathcal A\}<\infty$$ and is called $D$-{\em disjoint} if
$\inf\{d(a,a')\mid a\in A,\ a'\in A'\}>D$, for every distinct
$A,A'\in \mathcal A$.

The asymptotic dimension can be characterized in different terms;
in particular, in terms of extension of maps into euclidean spaces.
The aim of this paper is to introduce an asymptotic analogue of
the extension dimension introduced by Dranishnikov in \cite{D2,
D}.

\section{Preliminaries}\label{S:prelim}

A typical metric is denoted by $d$. By $N_r(x)$ we denote the open ball of
radius $r$ centered at a point $x$ of a metric space.

\subsection{Asymptotic category}
 A map $f\colon X\to Y$ between metric
spaces is called $(\lambda,\varepsilon)$-{\em Lipschitz\/} for $\lambda>0$,
$\varepsilon\ge0$ if $d(f(x),f(x'))\le\lambda d(x,x')+\varepsilon$ for every
$x,x'\in X$. A map is called {\em asymptotically Lipschitz\/} if it is
$(\lambda,\varepsilon)$-Lipschitz for some $\lambda,\varepsilon>0$.
The $(\lambda,0)$-Lipschitz maps are also called {\em $\lambda$-Lipschitz\/},
$(1,0)$-Lipschitz maps are also called {\em short}.
A metric space $X$ is called {\em proper\/} if every closed ball in $X$
is compact.

The {\em asymptotic category\/} $\A$ was introduced by Dranishnikov \cite{D}.
The objects of $\A$ are proper metric spaces and the morphisms are proper
asymptotically Lipschitz maps. Recall that a map is called {\em proper} if the
preimage of every compact set is compact.

We also need the notion of a {\em coarse map}. A map between proper metric
spaces is called {\em coarse uniform\/} if
for
every $C>0$ there is $K>0$ such that for every $x,x'\in X$ with $d(x,x')<C$ we
have $d(f(x),f(x'))<K$. A map $f\colon X\to Y$  is called {\em metric proper\/}
if the preimage $f^{-1}(B)$ is bounded for every bounded set $B\subset Y$.
A map is {\em coarse} if it is metric proper and coarse uniform.

\subsection{Higson compactification and Higson corona}

Let $\varphi\colon X\to\R$ be a function defined on a metric space $X$.
For every $x\in X$ and every $r>0$ let
$V_r\varphi(x)=\sup\{|\varphi(y)-\varphi(x)|\mid
y\in N_r(x)\}$. A function $\varphi$ is called {\it slowly oscillating\/}
whenever for every $r>0$ we have
$V_r\varphi (x)\to0$ as $x\to\infty$ (the latter means that for every
$\varepsilon>0$
there exists a compact subspace $K\subset X$ such that
$|V_r\varphi(x)|<\varepsilon$
for all $x\in X\setminus K$. 

Let $\bar X$ be the compactification of $X$ that
corresponds
to the family of all continuous bounded slowly oscillation functions. The {\it
Higson
corona\/} of $X$ is the remainder $\nu X=\bar X\setminus X$ of this
compactification.
It is well-known that the Higson corona is a functor from the category of proper
metric space and coarse maps into the category of compact Hausdorff spaces. In
particular, if $X\subset Y$, then $\nu X\subset \nu Y$.

For any subset $A$ of $X$ we denote by $A'$ its trace on $\nu X$, i. e. the
intersection of the closure of $A$ in $\bar X$ with $\nu X$. Obviously, the set
$A'$ coincides with the Higson corona $\nu A$.

\subsection{Cones}

Let $X$ be a metric space of diameter $\le1$. The {\em open cone\/}
of $X$ is the set $\OO X=(X\times\R_+)/(X\times\{0\})$ endowed with
the metric (by $[x,t]$ we denote the equivalence class of $(x,t)\in
X\times\R_+$):
$$d([x_1,t_1],[x_2,t_2])=|t_1-t_2|+\min\{t_1,t_2\}d(x_1,x_2).$$
For a map $f\colon X\to Y$ of metric spaces we denote by $\OO f\colon \OO X\to
\OO Y$ the
map
defined as $\OO f([x,t])=[f(x),t]$.

\begin{prop} If $f\colon X\to Y$ is a Lipschitz map than $\OO f$ is an
asymptotically
Lipschitz map.
\end{prop}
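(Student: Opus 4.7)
The idea is purely computational: the cone metric is a sum of a radial term (which $\OO f$ fixes identically) and a product of the radial coordinate with the base-space distance (on which $f$ acts with a uniform Lipschitz bound). Expanding these two summands separately should give a Lipschitz estimate on $\OO f$, not merely an asymptotically Lipschitz one.

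Concretely, I would fix $\lambda\ge 0$ with $d(f(x),f(x'))\le\lambda d(x,x')$ for all $x,x'\in X$, take two points $[x_1,t_1],[x_2,t_2]\in\OO X$, and unwind the definition of $\OO f$:
\begin{align*}
d(\OO f([x_1,t_1]),\OO f([x_2,t_2]))
&=d([f(x_1),t_1],[f(x_2),t_2])\\
&=|t_1-t_2|+\min\{t_1,t_2\}\,d(f(x_1),f(x_2))\\
&\le|t_1-t_2|+\lambda\min\{t_1,t_2\}\,d(x_1,x_2).
\end{align*}
Setting $\mu=\max\{1,\lambda\}$ and factoring out $\mu$ in each summand yields
\[
d(\OO f([x_1,t_1]),\OO f([x_2,t_2]))\le\mu\bigl(|t_1-t_2|+\min\{t_1,t_2\}\,d(x_1,x_2)\bigr)=\mu\,d([x_1,t_1],[x_2,t_2]),
\]
so that $\OO f$ is in fact $\mu$-Lipschitz, hence $(\mu,0)$-asymptotically Lipschitz.

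There is no real obstacle here; the only subtlety worth a sentence in the write-up is the distinction between the cases $\lambda\le 1$ and $\lambda>1$, which is why the Lipschitz constant on the cone is $\max\{1,\lambda\}$ rather than simply $\lambda$. (The extra factor $1$ comes from the $|t_1-t_2|$ term, which $\OO f$ preserves and which therefore forces any Lipschitz constant of $\OO f$ to be at least $1$.) Note that the assumption $\diam X,\diam Y\le 1$ is not used in this particular proposition; it serves only to ensure that the cone metrics in question are genuine metrics.
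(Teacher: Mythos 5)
Your computation is correct and is essentially the same as the paper's own argument: both expand the cone metric, apply the Lipschitz bound for $f$ to the term $\min\{t_1,t_2\}\,d(x_1,x_2)$, and absorb the radial term by taking the constant $\max\{1,\lambda\}$, so $\OO f$ is in fact Lipschitz (hence asymptotically Lipschitz). No gaps.
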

\begin{proof} Suppose a map $f\colon X\to Y$ is $\lambda$-Lipschitz. Then for
any $[x_1,t_1], [x_2,t_2]\in \OO X$ we have
\begin{align*}d(\OO f([x_1,t_1]), \OO f([x_2,t_2]))=&d([f(x_1),t_1],
[f(x_2),t_2])\\=&|t_1-t_2|+\min\{t_1,t_2\}d(f(x_1),f(x_2))\\ \le& \lambda'
(|t_1-t_2|+\min\{t_1,t_2\}d(x_1,x_2)),
\end{align*}
where $\lambda'=\max\{\lambda,1\}$.
\end{proof}
The open cone of a finite CW-complex is a coarse CW-complex in the sense of
\cite{M}.

 Denote by $\alpha_L\colon \OO L\to\R$ the function defined by
$\alpha_L([x,t])=t$.
Obviously, $\alpha_L$ is a short function.
Let $\tilde \OO L=\{[x,t]\in \OO L\mid t\ge1\}$. Denote by $\beta_L\colon \tilde
\OO L\to
L$ the map $\beta_L([x,t])=x$.
\begin{lemma} The map $\beta_L$ is slowly oscillating.
\end{lemma}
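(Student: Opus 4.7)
The plan is to unwind the cone metric and extract, for each $r>0$ and $\varepsilon>0$, a compact subset of $\tilde\OO L$ outside of which the $r$-oscillation of $\beta_L$ stays below $\varepsilon$. The key observation is an elementary consequence of the definition of the cone metric: if $[x_1,t_1],[x_2,t_2]\in\tilde\OO L$ satisfy $d([x_1,t_1],[x_2,t_2])<r$, then from
$$|t_1-t_2|+\min\{t_1,t_2\}\,d(x_1,x_2)<r$$
one reads off simultaneously $|t_1-t_2|<r$ and $d(x_1,x_2)<r/\min\{t_1,t_2\}$, and in particular $\min\{t_1,t_2\}\ge\max\{t_1,t_2\}-r$.

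Next, given $r$ and $\varepsilon$, I would fix $T>r$ with $r/(T-r)<\varepsilon$ and take $K=\{[x,t]\in\tilde\OO L\mid 1\le t\le T\}$. Assuming $L$ is compact (as in the coarse CW-complex setting just mentioned), $K$ is compact as the image of $L\times[1,T]$. For any $[x_1,t_1]\in\tilde\OO L\setminus K$ one has $t_1>T$, and any $[x_2,t_2]\in N_r([x_1,t_1])$ then forces $\min\{t_1,t_2\}>T-r$; hence
$$d(\beta_L([x_1,t_1]),\beta_L([x_2,t_2]))=d(x_1,x_2)<\frac{r}{T-r}<\varepsilon.$$
Taking the supremum over such $[x_2,t_2]$ gives $V_r\beta_L([x_1,t_1])\le\varepsilon$ off the compact set $K$, which is precisely what is needed for $\beta_L$ to be slowly oscillating.

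I do not expect any real obstacle here; the cone metric is engineered precisely so that radial projection becomes slowly oscillating as $t\to\infty$. The one point worth flagging is that the restriction $t\ge 1$ used in passing from $\OO L$ to $\tilde\OO L$ is essential: on the region $\{t<1\}$ the factor $\min\{t_1,t_2\}$ can be made arbitrarily small, so no bound on $d(x_1,x_2)$ of the type above is available and the desired vanishing of the oscillation would fail.
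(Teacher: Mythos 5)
Your proposal is correct and follows essentially the same route as the paper's own argument: outside the compact set $\{[x,t]\mid 1\le t\le T\}$ (the paper's $\{t<R\}$), closeness in the cone metric forces $\min\{t_1,t_2\}$ to be large, so the angular distance is at most $r/(T-r)$ (the paper's $K/(R-K)$), which tends to $0$. Your write-up is in fact cleaner, making explicit the choice of $T$, the compactness of the cut-off set via compactness of $L$, and the role of the restriction $t\ge1$.
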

\begin{proof} For $R>0$, the $R$-ball centered at $[x,0]$ is $\{[x,t]\mid t<R$.
If $d([x,t],[x_1,t_1])<K<R$, then $|t-t_1|+\min\{t,t_1\}d(x,x_1)<K$, i.e.
$(t-R)d(x,x_1)<R$ and $d(x,x_1)<K/(t-K)$. Therefore,
$d(\beta_L(x),\beta_L(x_1))<K/(R-K)\to 0$ as $R\to\infty$.
\end{proof}

Let $\bar\beta_L\colon \tilde \OO L\to L$ be the (unique) extension of the map
$\beta_L$. Denote by $\eta_L\colon \nu\tilde \OO L\to L$ the restriction of
$\beta_L$.
\begin{prop} Let $f\colon A \to \OO L$ be a proper asymptotically Lipschitz map
defined on a proper closed subset $A$ of a proper metric space $X$. There exists
a
neighborgood $W$ of $A$ in $X$, a  proper asymptotically Lipschitz map $g\colon
W\to \OO L$ with the following property: there exist constants $\lambda,s>0$
such
that $\alpha_L(g(a))\le \lambda d(a, X\setminus W)+s$.
\end{prop}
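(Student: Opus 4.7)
The strategy is to extend $f$ to a map $g$ whose cone-height descends linearly away from $A$, vanishing along the boundary of a suitably chosen neighborhood $W$. The cone metric
$d_{\OO L}([y_1,t_1],[y_2,t_2])=|t_1-t_2|+\min(t_1,t_2)\,d_L(y_1,y_2)$
contracts $L$-distances by the smaller of the two heights, so near the vertex the $L$-coordinate becomes essentially irrelevant; this is the key feature that should allow a non-canonical choice of $L$-component in the extension. Fix constants $(\lambda,\varepsilon)$ for which $f$ is $(\lambda,\varepsilon)$-Lipschitz, and note that $h:=\alpha_L\circ f\colon A\to[0,\infty)$ inherits these constants because $\alpha_L$ is short.

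I would first form the $\lambda$-Lipschitz McShane extension $\tilde h(x)=\sup_{a\in A}(h(a)-\lambda d(x,a))$, set $W=\{x\in X:\tilde h(x)>\varepsilon\}$ (an open neighborhood of $A$ up to an inessential compact region where $h$ is small), and use the axiom of choice to select, for each $x\in W$, a point $a_x\in A$ realizing the supremum to within $1$, with $a_x=x$ for $x\in A$, and arranged so that $d(x,a_x)$ is as small as the near-optimality permits. Writing $\pi_L\colon\OO L\to L$ for the base projection (extended by a fixed basepoint at the vertex), define $g(x)=[\pi_L(f(a_x)),\,\tilde h(x)-\varepsilon]$. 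The height bound is then immediate: if $y\in X\setminus W$ then $\tilde h(y)\le\varepsilon$, whence $\tilde h(x)-\varepsilon\le\lambda d(x,y)$ by Lipschitzness of $\tilde h$, giving $\alpha_L(g(x))\le\lambda d(x,X\setminus W)$ with $s=0$. Properness of $g$ follows from that of $f$ and $X$, since the preimage of a bounded region of $\OO L$ bounds $\tilde h$ and, via the localization of $a_x$, also bounds $x$.

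The main obstacle is verifying that $g$ is asymptotically Lipschitz despite the non-canonical selection $x\mapsto a_x$. The height difference $|\tilde h(x)-\tilde h(x')|\le\lambda d(x,x')$ comes from Lipschitzness of $\tilde h$. For the $L$-contribution $\min(\tilde h(x),\tilde h(x'))\,d_L(\pi_L f(a_x),\pi_L f(a_{x'}))$, the $(\lambda,\varepsilon)$-Lipschitz inequality for $f$ yields
$d_L(\pi_L f(a_x),\pi_L f(a_{x'}))\le\frac{\lambda d(a_x,a_{x'})+\varepsilon}{\min(h(a_x),h(a_{x'}))},$
which, combined with $\min(\tilde h(x),\tilde h(x'))\le\min(h(a_x),h(a_{x'}))$, reduces the $L$-contribution to $\lambda d(a_x,a_{x'})+\varepsilon$. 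Bounding $d(a_x,a_{x'})$ linearly in $d(x,x')$ is the delicate step: the near-optimality together with the minimal-$d(x,a_x)$ refinement of the selection gives a triangle-type control, and in the regime where that bound is too loose the alternative estimate $\min(\tilde h(x),\tilde h(x'))\cdot d_L\le\min(\tilde h(x),\tilde h(x'))$ from $\diam L\le 1$ takes over; an interpolation between the two regimes should yield the required asymptotic Lipschitz estimate.
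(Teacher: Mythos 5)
The step you flag as ``delicate''---bounding the base-point jump $d_L(\pi_Lf(a_x),\pi_Lf(a_{x'}))$ against $\min(\tilde h(x),\tilde h(x'))$---is not merely delicate: it is false, and no selection rule $x\mapsto a_x$ can repair it, because your $W=\{\tilde h>\varepsilon\}$ is simply too large. Concretely, take $L=S^0=\{p,q\}$ with $d(p,q)=1$, $X=[0,\infty)$, and for each $n$ put two anchor points $a_n^{\pm}=b_n\pm R_n$ in $A$ (with $b_n=4^{n+2}$, $R_n=4^n$) and set $f(a_n^{\pm})=[p,2R_n]$, $f(a_n^{-})=[q,2R_n]$. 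One checks $f$ is $1$-Lipschitz and proper. The McShane extension of the height with slope $\lambda=1$ gives $\tilde h\approx R_n$ throughout the interval between $a_n^-$ and $a_n^+$, so the whole middle region lies in your $W$; at $x=b_n+1$ the only near-optimal anchor is $a_n^+$ and at $x'=b_n-1$ it is $a_n^-$, so $g(x)$ and $g(x')$ have base points $p$ and $q$ at height about $R_n$, whence $d(g(x),g(x'))\approx R_n$ while $d(x,x')=2$. Thus $g$ is not asymptotically Lipschitz, and your fallback estimate via $\diam L\le1$ gives only $\min(\tilde h(x),\tilde h(x'))\approx R_n$, which is exactly the quantity that blows up. (Indeed no asymptotically Lipschitz map into $\OO L$ can have height of order $R_n$ on that middle region; the region must be \emph{excluded} from $W$, which your height-only construction cannot detect. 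Smaller problems: $\{\tilde h>\varepsilon\}$ need not contain all of $A$, your $g$ only agrees with $f$ up to an $\varepsilon$-shift in height, and bounding $\tilde h$ does not localize $x$, so the properness claim is also unjustified.)

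The missing idea is to use the ALNER structure of $L$, which your argument never invokes. The paper embeds $L$ in $I^n$ with a Lipschitz retraction $r\colon U\to L$ of a neighborhood $U$, extends $f$ to a $(\lambda',s')$-Lipschitz map $\tilde g\colon X\to\OO I^n$ (possible because $\OO I^n$ is Lipschitz equivalent to $\R^{n+1}_+$, so one can extend coordinatewise---this replaces your McShane extension of the height alone by an extension of \emph{all} coordinates), and then takes $W=\tilde g^{-1}(\OO U)$ and $g=\OO r\circ\tilde g|W$. In the example above the interpolated $I$-coordinate passes through the middle of $[0,1]$, outside $U$, so precisely the problematic middle region drops out of $W$. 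The height estimate then comes for free: for $a\in A$ and $w\in X\setminus W$ the point $\tilde g(w)$ is at base-distance $\ge c=d(L,I^n\setminus U)$ from $L$, so $c'\alpha_L(f(a))\le d(f(a),\tilde g(w))\le\lambda'd(a,w)+s'$, giving $\alpha_L(g(a))\le\lambda d(a,X\setminus W)+s$.
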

\begin{proof} We may assume that $L$ is a subset of $I^n$ for some $n$ and there
exists a Lipschitz retraction $r\colon U\to L$ of a neighborhood $U$ of $L$ in
$I^n$. Since $\OO I^n$ is Lipschitz equivalent to $\R^{n+1}_+$, there exists a
$(\lambda',s')$-Lipschitz extension $\tilde g\colon X\to \OO I^n$ of $g$.

Put $W=\tilde g^{-1}(\OO U)$ and $\bar g=\tilde g|W$. For every $a\in A$ and
$w\in
X\setminus W$ we have $$d(g(a),\tilde g(w)\le \lambda' d(a,w)+s'\le
\lambda'd(a,X\setminus W)+s.$$

Suppose that $d(L,I^n\setminus U)=c>0$, then, since $\tilde g(w)\notin CU$,
\begin{align*} d(g(a),\tilde g(w)= &|\alpha_L(g(a))-\alpha_L(\tilde g(w))| \\
+&\min\{\alpha_L(g(a)),\alpha_L(\tilde g(w))\} d(\beta_L(g(a)), \beta_L(\tilde
g(w))) \\
\ge & |\alpha_L(g(a))-\alpha_L(\tilde
g(w))|+c\min\{\alpha_L(g(a)),\alpha_L(\tilde g(w))\}\\
\ge & c'\alpha_L(g(a)),
\end{align*}
where $c'=\min\{c,1\}$. Then $\alpha_L(g(a))\le \lambda d(a, X\setminus W)+s$,
where $\lambda =\lambda'/c'$, $s=s'/c'$.\end{proof}

\section{Auxiliary results}

In this section we shall
collect some results needed for the proof of the
main result. They are proved in \cite{D} but it turns out that we
have also to cover the case of functions with infinite values.

A map $f\colon X\to\R_+\cup\{\infty\}$ is said to be {\em coarsely proper\/} if
the preimage $f^{-1}([0,c])$ is bounded for every $c\in \R_+$.
\begin{lemma}\label{l:1} For any function $\varphi\colon X\to  \R_+$ with
$\varphi(x)\to0$
as $x\to\infty$ the function $1/\varphi\colon X\to  \R_+\cup\{\infty\}$ is
coarsely proper.
\end{lemma}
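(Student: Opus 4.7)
The plan is a direct unpacking of definitions. I want to show that for each $c \in \R_+$, the preimage $(1/\varphi)^{-1}([0,c])$ is bounded. Rewriting,
\[
(1/\varphi)^{-1}([0,c]) = \{x \in X : 1/\varphi(x) \le c\} = \{x \in X : \varphi(x) \ge 1/c\},
\]
(with the understanding that $c=0$ gives the empty set, since $\varphi$ is real-valued and positive, so $1/\varphi$ never takes the value $0$).

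Next I would invoke the hypothesis $\varphi(x) \to 0$ as $x \to \infty$. By the definition recalled earlier in the paper, for the choice $\varepsilon = 1/c$ there exists a compact subspace $K \subset X$ such that $\varphi(x) < 1/c$ for all $x \in X \setminus K$. Contrapositively, the set $\{x \in X : \varphi(x) \ge 1/c\}$ is contained in $K$.

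Finally, since $K$ is compact in a metric space, it has finite diameter and hence is bounded; any subset of a bounded set is bounded, so $(1/\varphi)^{-1}([0,c])$ is bounded. This is exactly coarse properness of $1/\varphi$.

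There is essentially no obstacle here — the lemma is a one-step translation between the "eventually small" language used to define $\varphi(x)\to 0$ and the "preimages of bounded sets are bounded" language of coarse properness. The only small care needed is handling the case $c=0$ and being explicit that $\R_+$ is interpreted so that positivity of $\varphi$ keeps $1/\varphi$ away from $0$.
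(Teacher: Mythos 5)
Your argument is correct and is exactly the definitional unpacking intended here; the paper itself states Lemma \ref{l:1} without proof, and your chain $(1/\varphi)^{-1}([0,c])=\{x:\varphi(x)\ge 1/c\}\subset K$ for a compact (hence bounded) $K$ supplied by the hypothesis $\varphi(x)\to 0$ as $x\to\infty$ is precisely the implicit proof. The side remarks about $c=0$ and the values $\infty$ (where $\varphi$ vanishes, if $\R_+$ includes $0$) are handled adequately, since such points never lie in the preimage of $[0,c]$.
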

\begin{prop}\label{p:1} Let $f\colon X\to\R_+\cup\{\infty\}$ be a coarsely
proper function. There exists an asymptotically Lipschitz proper function
$q\colon X\to\R_+$ with $q\le f$.
\end{prop}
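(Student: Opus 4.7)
The plan is to construct $q$ as a composition $q = g\circ\rho$, where $\rho(x) = d(x,x_0)$ is the distance to a fixed basepoint $x_0 \in X$ and $g\colon \R_+ \to \R_+$ is a non-decreasing $1$-Lipschitz function that grows just slowly enough to stay below $f$. Since $\rho$ is automatically $1$-Lipschitz and $X$ is proper, such a composition will be asymptotically Lipschitz and its sub-level sets will sit inside closed balls; the only genuinely interesting issue will be arranging $q \le f$.

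Using the coarse properness of $f$, I would inductively choose an increasing, unbounded sequence $0 = R_0 < R_1 < R_2 < \dots$ with $R_{n+1} \ge R_n + 1$ and $f^{-1}([0, n+1]) \subseteq \overline{N_{R_n}(x_0)}$ for every $n$. The latter inclusion is what coarse properness delivers (a bounded set sits inside some closed ball centered at $x_0$), and the former is a free choice that will control Lipschitz constants. By contrapositive, $\rho(x) > R_n$ forces $f(x) > n+1$, and this remains valid when $f(x) = \infty$ — the case that forces the variant of Dranishnikov's argument. Next, let $g\colon \R_+ \to \R_+$ be the piecewise linear interpolant with $g(R_n) = n$. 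Its slope on each interval $[R_n, R_{n+1}]$ equals $1/(R_{n+1}-R_n) \le 1$, so $g$ is non-decreasing and $1$-Lipschitz. Consequently $q = g\circ\rho$ is $1$-Lipschitz (hence asymptotically Lipschitz), and $q^{-1}([0,c]) \subseteq \overline{N_{R_{\lceil c \rceil}}(x_0)}$, which is compact because $X$ is proper; this gives properness of $q$.

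The only point that needs genuine attention is verifying $q \le f$ pointwise, which is where the calibration between the indexing of $R_n$ and the threshold for $f$ is essential. For any $x$ with $\rho(x) \in [R_n, R_{n+1}]$ one has $q(x) \le n+1$. If $\rho(x) > R_n$ then by construction $f(x) > n+1 \ge q(x)$; if $\rho(x) = R_n$ for some $n \ge 1$ then $\rho(x) > R_{n-1}$ already yields $f(x) > n = q(x)$; and the edge case reduces to $x = x_0$, where $q(x_0) = g(0) = 0 \le f(x_0)$. Apart from this small shift-by-one bookkeeping, I expect no real obstacle: the construction is essentially the scalar form of the argument in \cite{D}, and the deliberate gap between the label $n$ of $R_n$ and the threshold $n+1$ used to define it is exactly what makes the case $f(x) = \infty$ automatic.
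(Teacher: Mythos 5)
Your overall strategy --- take $q=g\circ\rho$ with $\rho(x)=d(x,x_0)$ and a nondecreasing $1$-Lipschitz $g$ calibrated via the coarse properness of $f$ --- is sound, and it is essentially the scalar construction behind Proposition 3.5 of \cite{D} that the paper simply invokes (the point being that infinite values of $f$ cause no trouble). However, the calibration at the bottom end is genuinely broken as written. You pin $R_0=0$ and simultaneously require $f^{-1}([0,n+1])\subseteq\overline{N_{R_n}(x_0)}$ for \emph{every} $n$; at $n=0$ this demands $f^{-1}([0,1])\subseteq\overline{N_{0}(x_0)}$, i.e.\ $f>1$ away from the basepoint, which is not available in general, so the inductive choice cannot be carried out. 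If you drop the $n=0$ requirement and keep it only for $n\ge1$, then the verification of $q\le f$ fails precisely on the region $0<\rho(x)\le R_1$: your $g$ is strictly increasing from $g(0)=0$, so $q>0$ off $x_0$, whereas $f$ may vanish (or be small) on a large ball --- note that $q\le f$ forces $q=0$ on all of $f^{-1}(\{0\})$, a bounded set that need not reduce to $\{x_0\}$. Concretely, for $X=\R$, $x_0=0$, $f(x)=\max\{0,|x|-100\}$ (coarsely proper), any admissible $R_1$ satisfies $R_1\ge102$, and then $q(50)=50/R_1>0=f(50)$. So the ``edge case'' is not just $x=x_0$; it is the whole bounded region where $f$ is small, and your construction does not handle it.

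The repair is easy and leaves the rest of your argument intact: do not force $R_0=0$, and make $g$ flat near the origin. Choose $R_1<R_2<\cdots$ unbounded with $R_{n+1}\ge R_n+1$ and $f^{-1}([0,n])\subseteq\overline{N_{R_n}(x_0)}$ for $n\ge1$; set $g\equiv0$ on $[0,R_1]$ and $g(R_n)=n-1$ for $n\ge1$, interpolating linearly. Then $g$ is nondecreasing and $1$-Lipschitz, so $q=g\circ\rho$ is $(1,0)$-Lipschitz and proper (its sublevel sets lie in closed balls of $X$, which are compact), and the pointwise bound holds: for $\rho(x)\in(R_n,R_{n+1}]$ one has $q(x)\le n$ while $x\notin f^{-1}([0,n])$ gives $f(x)>n$ (automatically so when $f(x)=\infty$), and for $\rho(x)\le R_1$ one has $q(x)=0\le f(x)$. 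With this shift your proof is correct and is, in substance, the same argument the paper delegates to \cite{D}.
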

\begin{proof} This was proved in \cite{D} for the case of $f\colon X\to\R_+$
(see Proposition 3.5). That proof also works in our case.
\end{proof}
\begin{prop}\label{p:2} Let $f_n\colon X\to\R_+\cup\{\infty\}$ be a sequence of
coarsely
proper functions. Then there exists a filtration $X=\cup_{n=1}^\infty A_n$ and a
coarsely proper function $f\colon X\to \R_+$ with $f|A_n\le n$ and
$f|(X\setminus A_n)\le f_n$ for every $n$.
\end{prop}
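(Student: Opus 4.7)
The plan is to build a nested bounded filtration $\{A_n\}$ adapted to the sublevel sets of the $f_n$, and then to define $f$ out of the filtration index $m(x)=\min\{n\ge 1:x\in A_n\}$ together with the earlier values $f_1(x),\dots,f_{m(x)-1}(x)$.

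First, I fix a basepoint $x_0\in X$. Since every $f_n$ is coarsely proper, the sublevel set $f_n^{-1}([0,n])$ is bounded, so I can pick strictly increasing radii $R_n\to\infty$ such that $A_n:=N_{R_n}(x_0)$ contains $f_n^{-1}([0,n])$. Then $A_n\subset A_{n+1}$, each $A_n$ is bounded, and $\bigcup_n A_n=X$; this is the desired filtration.

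Next I set
$$
f(x)\;=\;\min\Bigl(m(x),\ \min_{1\le k<m(x)}f_k(x)\Bigr),
$$
where the empty inner minimum (when $m(x)=1$) is interpreted as $+\infty$, so $f=1$ on $A_1$. Because $f(x)\le m(x)<\infty$, the function takes values in $\R_+$. The two filtration inequalities are then immediate: if $x\in A_n$ then $m(x)\le n$, whence $f(x)\le n$; and if $x\notin A_n$ then $n<m(x)$, so the index $k=n$ appears in the inner minimum and $f(x)\le f_n(x)$.

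The main obstacle is to verify that $f$ is coarsely proper, and this is where the choice of $A_n$ pays off. Fix $c>0$ and suppose $f(x)\le c$. If $m(x)\le c$, then $x\in A_{\lceil c\rceil}$. Otherwise $m(x)>c$ and some index $k$ with $1\le k<m(x)$ satisfies $f_k(x)\le c$; if such a $k$ were $\ge \lceil c\rceil$, then $f_k(x)\le c\le k$ would place $x$ in $f_k^{-1}([0,k])\subset A_k$, contradicting $k<m(x)$. Hence $k<\lceil c\rceil$ and $x$ lies in the finite union $\bigcup_{k=1}^{\lceil c\rceil-1}f_k^{-1}([0,c])$ of bounded sets. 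Altogether $f^{-1}([0,c])\subseteq A_{\lceil c\rceil}\cup\bigcup_{k<\lceil c\rceil}f_k^{-1}([0,c])$ is bounded. The crucial design choice is the inclusion $f_n^{-1}([0,n])\subseteq A_n$, engineered precisely to absorb the "dangerous" sublevel sets of $f_k$ for large $k$ into the filtration.
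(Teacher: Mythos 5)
Your proof is correct, and it follows the same basic strategy as the paper---exhaust $X$ by nested bounded sets that swallow the sublevel sets $f_n^{-1}([0,n])$ and read $f$ off the exhaustion---but the two constructions distribute the work differently. The paper builds the filtration around the cumulative sets $B_n=\bigcup_{i=1}^n f_i^{-1}([0,n])$, chooses bounded $A_n$ with $A_n\cap\bigl(\bigcup_i B_i\bigr)=B_n$, and simply sets $f\equiv n$ on $A_n\setminus A_{n-1}$, so that the bound off $A_n$ is meant to come from $x\notin B_n\Rightarrow f_n(x)>n$. Your $A_n$ are plain balls required to contain only $f_n^{-1}([0,n])$, and you instead build the needed inequality into $f$ itself by taking the minimum with the earlier values $f_k(x)$, $k<m(x)$; the price is the extra (and correctly executed) case analysis showing that $f^{-1}([0,c])\subseteq A_{\lceil c\rceil}\cup\bigcup_{k<\lceil c\rceil}f_k^{-1}([0,c])$ is bounded. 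A side benefit of your version is that it yields $f\le f_n$ on $X\setminus A_n$ exactly, whereas the paper's argument, read literally, only gives $f_n(x)>n$ there while $f$ itself takes values $>n$ off $A_n$ (the displayed chain $f_n(x)>n\ge f|(X\setminus A_n)$ has an off-by-one slip, easily repaired, e.g.\ by putting $f=n-1$ on $A_n\setminus A_{n-1}$); your minimum construction sidesteps this issue altogether.
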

\begin{proof} Let $B_n=\cup_{i=1}^nf^{-1}_i([0,n])$. the sets $B_i$ are bounded
and $B_1\subset B_2\subset\dots$. Therefore, there exist bounded subsets
$A_1\subset A_2\subset\dots$ such that $A_n\cap(\cup_{i=1}^\infty B_i)=B_n$ and
$\cup_{i=1}^\infty A_i=X$. For $x\in A_n\setminus A_{n-1}$, put $f(x)=n$.
Obviously, $f$ is coarsely proper and $f|A_n\le n$. Now suppose that $x\notin
A_n$, then  $x\notin B_n$ and therefore $x\notin f^{-1}_n([0,n])$, i.e.
$f_n(x)>n\ge f|(X\setminus A_n)$.
\end{proof}

The following is an easy modification of Lemma 3.6 from \cite{D} and the proof
of it works in our case as well.
\begin{lemma} Suppose that $f\colon A\to \R_+\cup\{\infty\}$ is a coarsely
proper map defined on a closed subset $A$ of a proper metric space $X$ and
$g\colon W\to \R_+$ is a proper asymptotically Lipschitz map such that $g\le
f|W$ and  there exist $\lambda,s$ such that $\lambda d(a,X\setminus W)+s\ge
g(a)$ for every $a\in A$. Then there exists a proper  asymptotically Lipschitz
map $\bar g\colon X\to \R_+$ for which $\bar g\le f$ and $\bar g|A=g$.
\end{lemma}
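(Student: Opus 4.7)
The plan is to construct $\bar g$ by an infimum-convolution extension of $g|A$ from $A$ to all of $X$, using Proposition \ref{p:1} to manage the possibly $\infty$-valued upper bound $f$ and the boundary-decay hypothesis to ensure compatibility across the frontier of $W$. As a preparatory step I would unify constants: if $(\mu,\varepsilon)$ are asymptotic Lipschitz constants for $g$, set $\mu'=\max\{\mu,\lambda\}$ and $\varepsilon'=\max\{\varepsilon,s\}$, so that a single pair $(\mu',\varepsilon')$ governs both the asymptotic Lipschitz behavior of $g$ on $W$ and the boundary estimate $g(a)\le\mu'\, d(a,X\setminus W)+\varepsilon'$ on $A$. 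I would then apply Proposition \ref{p:1} to $f$ (viewed as a coarsely proper function on the proper metric space $A$) to obtain a finite-valued proper asymptotically Lipschitz minorant $q\colon A\to\R_+$ with $q\le f$; this is where the $\infty$-valued extension of Proposition \ref{p:1} proved earlier in this section is essential, since $f$ is allowed to take the value $\infty$.

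Next I would define
$$\bar g(x)=\inf_{a\in A}\bigl(g(a)+\mu'\, d(x,a)+\varepsilon'\bigr),\qquad x\in X,$$
and verify three properties. First, a standard infimum-convolution calculation shows $\bar g$ is $(\mu',2\varepsilon')$-Lipschitz on $X$. Second, on $A$, the Lipschitz lower bound $g(a')\ge g(a)-\mu'\, d(a,a')-\varepsilon'$ forces $a'=a$ to achieve the infimum up to the additive constant $\varepsilon'$, so $\bar g$ agrees with $g$ on $A$ within that bounded tolerance; a standard minimum-with-$q$ correction (remaining below $f$ since $q\le f$) secures the exact equality $\bar g|A=g$. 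Third, properness follows by picking $a_n\in A$ near the infimum: if $\bar g(x_n)\le C$ then $g(a_n)\le C+1$ and $d(x_n,a_n)\le (C+1)/\mu'$, so $\{a_n\}$ lies in the bounded set $(g|A)^{-1}([0,C+1])$ (using that $g|A$ is proper because $g\colon W\to\R_+$ is proper and $A$ is closed in the proper space $X$), hence $\{x_n\}$ is bounded as well. The inequality $\bar g\le f$ then holds on $A$ by construction, since $\bar g|A=g\le f$.

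The main obstacle is the compatibility of the extension across the frontier of $W$: for $a\in A$ close to $X\setminus W$ one needs the summand $g(a)+\mu'\, d(x,a)+\varepsilon'$ in the infimum to remain under control without the Lipschitz constant degenerating. This is precisely where the boundary hypothesis enters: for any $x\in X\setminus W$ the triangle inequality gives $d(a,x)\ge d(a,X\setminus W)$, so combining with $g(a)\le\mu'\, d(a,X\setminus W)+\varepsilon'$ yields $g(a)\le\mu'\, d(a,x)+\varepsilon'$, which is exactly the inequality needed to prevent a Lipschitz jump when one crosses from points deep inside $W$ to $X\setminus W$. Carrying out this bookkeeping, together with the minimum-with-$q$ correction mentioned above, follows the template of Lemma 3.6 of \cite{D}.
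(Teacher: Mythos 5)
Note first that the paper does not write out a proof of this lemma at all: it simply states that the argument of Lemma 3.6 of \cite{D} goes through, so your attempt has to be measured against that Dranishnikov-style argument and against the way the lemma is used in Theorem 4.1. Measured so, there is a genuine gap at exactly the point that makes the lemma nontrivial. In the intended reading (the one of \cite{D}, and the one needed in the proof of Theorem 4.1, where the lemma is invoked to get $\bar f\le r$ on the whole space together with $\bar f|A=\alpha_Lf$), the coarsely proper bound is a function on all of $X$ and the inequality $\bar g\le f$ is required \emph{globally}, not only on $A$. Your infimum-convolution $\bar g(x)=\inf_{a\in A}\bigl(g(a)+\mu'd(x,a)+\varepsilon'\bigr)$ will in general violate this away from $A$: a coarsely proper $f$ may grow arbitrarily slowly, while $\bar g$ grows roughly like $\mu'd(x,A)$, so if $A$ is sparse then $\bar g$ overshoots $f$ between points of $A$ even though $g\le f$ on $A$. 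You only verify $\bar g\le f$ on $A$. The ``minimum-with-$q$ correction'' cannot repair this as you set it up: you obtained $q$ by applying Proposition \ref{p:1} to $f$ \emph{restricted to $A$}, so $q$ is not defined off $A$ and controls nothing there; and if instead you take a proper asymptotically Lipschitz minorant $q\le f$ on all of $X$ and pass to $\min\{\bar g,q\}$, you regain $\bar g\le f$ but lose $\bar g|A=g$, since a generic minorant of $f$ bears no relation to $g$ on $A$. For the same reason the claim that the minimum with $q$ ``secures the exact equality $\bar g|A=g$'' is unfounded even on $A$: the inf-convolution only gives $g\le\bar g|A\le g+\varepsilon'$, and taking a minimum with an unrelated function does not produce $g$.

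Relatedly, you have misidentified the role of the hypothesis $g(a)\le\lambda d(a,X\setminus W)+s$: in your construction it is vacuous, since $\bar g$ depends only on $g|A$ and is automatically asymptotically Lipschitz, with $W$ and $g|_{W\setminus A}$ never used. Its actual purpose is to resolve the tension described above. The function $h(x)=\lambda d(x,X\setminus W)+s$ dominates $g$ on $A$, so one may truncate $g$ to $\min\{g,h\}$ on $W$ without changing its values on $A$, extend by a function that is small outside $W$ (hence stays below $f$ there, since $\min\{g,h\}\le g\le f$ on $W$), and then restore properness by taking the maximum with a proper asymptotically Lipschitz minorant $q\le f$ on $X$ chosen, via the $\infty$-valued Proposition \ref{p:1} applied to $f$ suitably modified on $A$, to satisfy in addition $q\le g$ on $A$; only with such a $q$ do the four requirements --- $\bar g\le f$ on $X$, $\bar g|A=g$, properness, and the asymptotic Lipschitz condition --- hold simultaneously. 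As written, your plan does not produce the global bound $\bar g\le f$, and its mechanism for exactness on $A$ does not work, so the proof is incomplete.
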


\subsection{Almost geodesic spaces}
A metric space $X$ is said to be {\em almost geodesic\/} if there exists $C>0$
such that  for every two points $x,y\in X$ there is a short map  $f\colon
[0,Cd(x,y)]\to X$
with $f(0)=x$, $f(Cd(x,y))=y$. If in this definition $C=1$, then we come to the
well-known notion of {\em geodesic} space.

We are going to describe a construction of embedding of a discrete metric space
$X$ into an almost geodesic space of the  asymptotic dimension $\min\{\asd
X,1\}$.
For an unbounded  discrete metric space $X$ with base point
$x_0$  define a function $f\colon X\to[0,\infty)$ by the formula
$f(x)=d(x,x_0)$. Choose a sequence $0=t_0<t_1<t_2<\dots$ in $f(X)$ so that
$t_{i+1}>2t_i$ for every $i$. To every pair of points $x,y\in
f^{-1}([t_i,t_{i+1}])$, for some $i$, attach the line segment $[0,d(x,y)]$ along
its endpoints. Let $\hat X$ is the union of $X$ and all attached segments. We
endow $\hat X$ with the
maximal metric that agrees with the initial metric on $X$ and the standard
metric on every attached segment.

Note that since $X$ is discrete and proper, every set $f^{-1}([t_i,t_{i+1}])$ is
finite and therefore  $\hat X$ is a proper metric space.

\begin{prop} The space $\hat X$ is almost geodesic.
\end{prop}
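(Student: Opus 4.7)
The plan is to show that for any two points $u, v \in \hat X$ one can construct an explicit $1$-Lipschitz map $[0, C\, d_{\hat X}(u,v)] \to \hat X$ sending $0$ to $u$ and $C\, d_{\hat X}(u,v)$ to $v$, for some universal $C$. I first reduce to the case $u, v \in X$: an interior point of an attached segment lies within that segment's full length of either endpoint, a bounded additive error which can be absorbed into $C$.

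For $x, y \in X$, I split into two cases. If $x, y \in f^{-1}([t_i, t_{i+1}])$ for some $i$, the corresponding attached segment is isometric to $[0, d(x,y)]$ and directly gives the desired map with $C = 1$. Otherwise, pick indices $i < j$ with $f(x) \in [t_i, t_{i+1}]$ and $f(y) \in [t_j, t_{j+1}]$, and choose $w_k \in f^{-1}(t_k)$ for $i+1 \le k \le j$, using that $t_k \in f(X)$ by construction. Since each consecutive pair $w_k, w_{k+1}$, together with the end pairs $x, w_{i+1}$ and $w_j, y$, lies in a common set $f^{-1}([t_k, t_{k+1}])$, concatenating the corresponding attached segments gives an explicit path from $x$ to $y$. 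Each piece is bounded by the triangle inequality in $X$, and the geometric growth $t_{k+1} > 2 t_k$ makes the total length a geometric sum dominated by a constant multiple of $t_j$.

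The main obstacle is to estimate this length against $d_{\hat X}(x, y)$. When $j \ge i+2$ the straightforward lower bound $d_{\hat X}(x, y) \ge |f(x) - f(y)| \ge t_j - t_{j-1} \ge t_j/2$ closes the argument. The delicate case is $j = i+1$, where both $f(x)$ and $f(y)$ may lie close to $t_{i+1}$ and the only available detour passes through a point $w_{i+1} \in f^{-1}(t_{i+1})$; here one bounds $d(x, w_{i+1}) + d(w_{i+1}, y)$ against $d(x, y)$ using the triangle inequality applied at the base point $x_0$ together with the doubling property of the $t_k$'s. Finally, reparametrizing the concatenated path by arclength, and extending constantly on the right if the prescribed interval $[0, C\, d(x,y)]$ is longer than the path, produces the required short map.
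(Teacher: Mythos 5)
Your overall skeleton (reduce to points of $X$, use the within-band segment in the easy case, chain through points $w_k\in f^{-1}(t_k)$ otherwise) matches the paper's, but the case you yourself flag as delicate, $j=i+1$, contains a genuine gap, and the tools you invoke cannot close it. If $f(x)\in[t_i,t_{i+1}]$ and $f(y)\in[t_{i+1},t_{i+2}]$ with both values close to $t_{i+1}$, then $d(x,y)\ge|f(x)-f(y)|$ may be arbitrarily small, while the triangle inequality at the base point only yields $d(x,w_{i+1})+d(w_{i+1},y)\le f(x)+2t_{i+1}+f(y)$, an upper bound of order $t_{i+1}$; no doubling property of the $t_k$'s converts an upper bound of that size into one of the form $C\,d(x,y)$ when $d(x,y)$ is tiny, and the definition of almost geodesic allows no additive slack. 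What you would actually need is a point of $f^{-1}(t_{i+1})$ at distance $O(d(x,y))$ from $x$, and nothing in the construction guarantees its existence; moreover every path in $\hat X$ joining $x$ to $y$ must cross $f^{-1}(t_{i+1})$, so the problem is not an artifact of your particular chain. (For what it is worth, the paper's own Case~2 estimate $t_j-t_{j-1}\le d(x_1,y_1)$ tacitly uses $t_{i+1}\le t_{j-1}$, i.e.\ $j\ge i+2$, so you have correctly located the thin spot — but naming it and gesturing at ``triangle inequality at $x_0$ plus doubling'' is not a proof, and as written that step fails.)

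Two smaller points. First, the reduction to $x,y\in X$ should not be phrased as ``a bounded additive error absorbed into $C$'': attached segments have unbounded length, and an additive error cannot be absorbed into a purely multiplicative constant when $d(u,v)$ is small; the correct reduction, as in the paper, is that $d_{\hat X}(u,v)$ itself decomposes as $d(u,x_1)+d_{\hat X}(x_1,y_1)+d(y_1,v)$ through suitable segment endpoints, so those legs are accounted for exactly. Second, in the case $j\ge i+2$ the total length of your chain is not ``dominated by a constant multiple of $t_j$'' if the last leg is estimated through $x_0$: $d(w_j,y)\le t_j+f(y)$ can be of order $t_{j+1}$. You need instead $d(w_j,y)\le d(w_j,x)+d(x,y)\le 2t_j+d(x,y)$ (or the paper's bound via $d(y,f^{-1}([0,t_{j-1}]))\le d(x,y)$), after which $t_j\le 2d(x,y)$ finishes that case; this is fixable, but it must be said.
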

\begin{proof}
Suppose that $x,y\in
\hat X$, then
$x\in [x_1,x_2]$, $y\in [y_1,y_2]$, where $x_1,x_2,y_1,y_2\in X$ and
$[x_1,x_2]$, $[y_1,y_2]$ are attached segments. We may suppose that
$d(x,y)=d(x,x_1)+d(x_1,y_1)+d(y_1,y)$.

Case 1): There exists $i$ such that $x_1,y_1\in f^{-1}([t_i,t_{i+1}])$. Then
$[x,x_1]\cup[x_1,y_1]\cup[y_1,y]$ is a segment of diameter $d(x,y)$ that
connects $x$ and $y$ in $\hat X$.

Case 2): $f(x_1)\in[t_i,t_{i+1}]$, $f(y_1)\in[t_j,t_{j+1}]$, where $i\neq j$.
Without loss of generality, we may assume that $i<j$.

Obviously, $d(x_1,y_1)\le d(x,y)$. Since $|t_{j}-t_{j-1}|\le d(x_1,y_1)$, we see
that  $|t_{j}-t_{j-1}|\le d(x,y)$. This implies that $t_j/2\le d(x,y)$, or
equivalently, $t_j\le d(x,y)$.
Besides, $d(y_1, f^{-1}([0,t_{j-1}])))\le d(x_1,y_1)\le d(a,b)$.

For every $k=i,i+1,\dots,j_1$ choose $z_k\in f^{-1}(t_k)$. Then
\begin{align*} d(y_1,z_{j-1})\le &d(y_1,f^{-1}([0,t_{j-1}]) +\diam
(f^{-1}([0,t_{j-1}]))\\ \le &
d(a,b)+2t_{j-1}\le d(a,b)+t_{j}\le 3d(a,b).
\end{align*}
We connect $x$ and $y$ by the segment
$$J=[x,x_1]\cup[x_1,z_1]\cup\cup_{k=i}^{j-1}[z_k,z_{k+1}])
\cup[z_{j-1},y_1]\cup[y_1,y].$$
Then
\begin{align*} \diam J\le &
d(x,x_1)+d(x_1,z_{i+1})+\left(\sum_{k=i+1}^{j-1}d(z_k,z_{k+1})\right)+
d(z_{j-1},y_1)+d(y_1,y)\\=& d(x,y)+2t_{i+1}+
\sum_{k=i+1}^{j-1}2t_{k+1}+5d(x,y)+d(x,y)\\ \le& 7d(x,y)+2(t_{i+1}+\dots+t_j)\le
7d(x,y)+4t_j\\ \le&15d(x,y).
\end{align*}
\end{proof}
We need a version of the fact proved in \cite{D} for geodesic spaces.

\begin{prop}\label{p:geod} Let $f\colon X\to Y$ be a coarse uniform map of an
almost geodesic
space
$X$. Then $f$ is asymptotically Lipschits.
\end{prop}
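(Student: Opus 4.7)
The plan is to imitate the classical geodesic-space argument, with a factor of $C$ coming from the definition of almost geodesic. Specifically, fix any $C_0>0$ (say $C_0=1$); by coarse uniformity there exists $K>0$ such that $d(u,v)\le C_0$ implies $d(f(u),f(v))\le K$. For arbitrary $x,y\in X$, I would apply the almost-geodesic hypothesis to obtain a short map $\gamma\colon[0,Cd(x,y)]\to X$ with $\gamma(0)=x$ and $\gamma(Cd(x,y))=y$, where $C$ is the constant from the definition of almost geodesic.

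The next step is a standard subdivision. Choose $N=\lceil Cd(x,y)/C_0\rceil$ and pick points $0=s_0<s_1<\dots<s_N=Cd(x,y)$ with $s_{k+1}-s_k\le C_0$. Setting $p_k=\gamma(s_k)$, shortness of $\gamma$ gives $d(p_k,p_{k+1})\le s_{k+1}-s_k\le C_0$, so by the choice of $K$ we obtain $d(f(p_k),f(p_{k+1}))\le K$ for every $k$. Therefore, by the triangle inequality,
\begin{equation*}
d(f(x),f(y))\le\sum_{k=0}^{N-1}d(f(p_k),f(p_{k+1}))\le KN\le K\!\left(\frac{Cd(x,y)}{C_0}+1\right)=\frac{KC}{C_0}d(x,y)+K.
\end{equation*}
Setting $\lambda=KC/C_0$ and $\varepsilon=K$ exhibits $f$ as $(\lambda,\varepsilon)$-Lipschitz, i.e.\ asymptotically Lipschitz.

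I expect no real obstacles here: the only place to be careful is the bookkeeping in the subdivision (and ensuring the constant $C$ from the almost-geodesic definition only enters as a multiplicative factor on $d(x,y)$, not inside a worse function), and checking that coarse uniformity is applied with a \emph{single} fixed scale $C_0$ that is chosen before $x,y$ are considered, so that $K$ is a uniform constant independent of $x,y$. Once that is set up, the argument is a direct adaptation of the geodesic-space proof from \cite{D}.
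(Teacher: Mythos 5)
Your proposal is correct and is essentially the paper's own argument: subdivide the almost-geodesic segment $[0,Cd(x,y)]$ into steps of uniformly bounded length, apply coarse uniformity at that fixed scale, and sum via the triangle inequality. The only cosmetic difference is that the paper takes the step size to be $C$ itself (so $k\le[d(x,y)]+1$ pieces) while you fix $C_0=1$, which changes nothing of substance.
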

\begin{proof} Let $C$ be a constant from the definition of almost geodesic
space. Suppose $x,y\in X$, then there exists a short map $\alpha\colon [0,
Cd(x,y)]\to X$ such that $\alpha(0)=x$, $\alpha(Cd(x,y))=y$. There exist points
$0=t_0<t_1<\dots<t_{k-1}<t_k=Cd(x,y)$, where $k\le[d(x,y)]+1$, such that
$|t_i-t_{i-1}|\le C$ for every $i=1,\dots,k$.

Since $f$ is coarse uniform, there exists $R>0$ such that $d(f(x'),f(y')<R$
whenever $d(x',y')\le C$. Then
\begin{align*}d(f(x),f(y))\le&\sum_{i=1}^kd(f(\alpha(t_i)),
f(\alpha(t_{i-1})))\le kR\le ([d(x,y)]+1)R\\ \le& Rd(x,y)+2R.
\end{align*}
\end{proof}

\section{Asymptotic extension dimension}

Let $P$ be an object of the category $\A$. For any object $X$ of
$\A$ the {\em Kuratowski notation\/} $X\tau P$ means the following:
for every proper asymptotically Lipschitz map $f\colon A\to P$
defined on a closed subset $A$ of $X$ there is a proper
asymptotically Lipschitz extension of $f$ onto $X$.

Denote by $\LL$ the class of compact absolute Lipschitz neighborhood euclidean
extensors (ALNER).
Following \cite{}, we define a preorder relation $\leq$ on $\LL$. For
$L_1,L_2\in\LL$, we have $L_1\leq L_2$ if and only if $X\tau \OO L_1$ implies
$X\tau \OO L_2$ for all proper metric spaces $X$. This preorder relation leads
to the following equivalence relation $\sim $ on $\LL$: $L_1\sim L_2$ if and
only if $L_1\leq L_2$ and $L_2\leq L_1$. We denote by $[L]$ the equivalence
class containing $L\in\LL$.

For a proper metric space $X$, we say that its {\em asymptotic
extension dimension does not exceed $[\OO L]$} (briefly $\aed X\le
[\OO L]$ whenever $X\tau\OO L$.
If $\aed X\le [\OO L]$, then the equality $\aed X= [\OO L]$ means
the following. If we also have $\aed X\le [\OO L']$, then  $[\OO
L] \le[\OO L']$.
By  \cite{D} (see also \cite{S}), the element $[*]$
is maximal.

\begin{thm} Let $L$ be a compact metric ALNER. The following conditions are
equivalent:
\begin{enumerate}
\item $\aed X\le[\OO L]$;
\item $\ed \nu X\le[L]$.
\end{enumerate}
\end{thm}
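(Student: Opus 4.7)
The plan is to exploit a ``lift and project'' dictionary between proper asymptotically Lipschitz maps into the open cone $\OO L$ and slowly oscillating continuous maps into $L$. In one direction, the extension $\bar\beta_L$ and the restriction $\eta_L\colon\nu\tilde\OO L\to L$ provide a canonical way to turn a map into $\OO L$ into a continuous map on the Higson corona valued in $L$. In the other direction, a slowly oscillating map into $L$ can be lifted into $\OO L$ by augmenting it with a carefully chosen radial coordinate $\rho\colon X\to\R_+$ that is proper, asymptotically Lipschitz, and grows slowly enough to control oscillations, as in Propositions~\ref{p:1}~and~\ref{p:2}.

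For the implication $(1)\Rightarrow(2)$, I would start from a continuous $f\colon F\to L$ with $F\subset\nu X$ closed. Since $L$ is an ALNER, hence an ANR, I extend $f$ continuously to a neighborhood $V$ of $F$ in $\bar X$; the restriction $\hat f$ to $V\cap X$ is automatically slowly oscillating. Choose a closed thickening $A\subset V\cap X$ whose trace $A'$ still contains $F$ and which stays away from $\bar X\setminus V$ in a controlled way. Then construct $\bar f\colon A\to\OO L$ by $\bar f(a)=[\hat f(a),\rho(a)]$, where $\rho$ is built from Lemma~\ref{l:1} and Proposition~\ref{p:1} applied to $1/V_r\hat f$, so that $\rho$ is proper asymptotically Lipschitz but grows slowly enough to keep $\bar f$ asymptotically Lipschitz. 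Invoke $X\tau\OO L$ to extend $\bar f$ to $g\colon X\to\OO L$; since $\alpha_L\circ g$ is proper, $\nu g$ lands (near infinity, which is all that matters) in $\nu\tilde\OO L$, and $\eta_L\circ\nu g$ provides the desired continuous extension of $f$ to $\nu X$.

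For $(2)\Rightarrow(1)$, start from a proper asymptotically Lipschitz $f\colon A\to\OO L$ on a closed $A\subset X$. Since $\alpha_L\circ f$ is proper, $\nu f$ sends $\nu A$ into $\nu\tilde\OO L$, so $g:=\eta_L\circ\nu f\colon\nu A\to L$ is defined and continuous. Extend $g$ via $\nu X\tau L$ to $\bar g\colon\nu X\to L$, then further to a continuous map on a neighborhood of $\nu X$ in $\bar X$ using that $L$ is ANR, and restrict to obtain a slowly oscillating $\hat g$ defined on the complement of a compact set in $X$. Lift $\hat g$ into $\OO L$ on a suitable neighborhood $W$ of $A$ by $x\mapsto[\hat g(x),\rho(x)]$, choosing $\rho$ via Propositions~\ref{p:1}--\ref{p:2} so that the lift is proper asymptotically Lipschitz and satisfies the growth estimate $\alpha_L(\cdot)\le\lambda d(\cdot,X\setminus W)+s$. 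Finally, apply the last proposition of Section~\ref{S:prelim} together with the gluing lemma of Section~3 to paste $f$ with this lift and obtain the required asymptotically Lipschitz extension $X\to\OO L$; Proposition~\ref{p:geod} is used to upgrade coarse-uniform behavior of the lifted map on an almost geodesic enlargement, when needed, to asymptotically Lipschitz.

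The main technical obstacle in both directions is engineering the radial function $\rho$. It must simultaneously be proper and asymptotically Lipschitz, grow slowly enough that the cone metric $|t-t'|+\min\{t,t'\}d(\cdot,\cdot)$ remains controlled against the oscillations of $\hat f$ or $\hat g$, and satisfy the neighborhood bound needed to invoke the extension-with-bound lemma of Section~3. Balancing these three constraints is precisely what the auxiliary results of Section~3 are designed for, and each of them will be used in an essential way.
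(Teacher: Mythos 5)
Your proposal follows essentially the same route as the paper's own proof: the same lift--project dictionary via $\eta_L$ and a radial coordinate built from Lemma~\ref{l:1} and Propositions~\ref{p:1}--\ref{p:2} for $(1)\Rightarrow(2)$, and for $(2)\Rightarrow(1)$ the same use of the neighborhood extension with growth bound, the gluing lemma of Section~3, and the almost geodesic enlargement with Proposition~\ref{p:geod} to upgrade coarse uniformity to the asymptotically Lipschitz condition. The only differences are matters of bookkeeping (e.g.\ working with $\nu A$ directly and with a closed thickening inside $V\cap X$), so the argument is correct and matches the paper.
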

\begin{proof} Assume that $\aed X\le[\OO L]$. Let $\varphi\colon C\to L$ be a
map
defined on a closed subset $C$ of $\nu X$. Since $L\in \mathrm{ANE}$, there
exists an
extension $\varphi'\colon V\to L$ of $\varphi$ over a closed neighborhood $V$ of
$C$ in $\bar X=X\cup\nu X$. Then $\Var_R\varphi'(x)\to0$ as $x\to\infty$, for
any fixed $R>0$. By Lemma \ref{l:1}, the function $$f_n\colon V\cap
X\to\R_+\cup\{\infty\},\
f_n(x)=\frac{1}{\Var_R\varphi'(x)},$$
is coarsely proper, for every $n\in\N$. By Proposition \ref{p:2}, there is a
coarsely proper function $f\colon V\cap X\to\R_+$ and a filtration $V\cap
X=\cup_{n=1}^\infty A_n$ such that $f|A_n\le n$ and $f|(X\setminus A_n)\le f_n$.
By Proposition 3.5 from \cite{D}, there is an asymptotically Lipschitz function
$q\colon
V\cap X\to\R_+$ with $q\le f$. We suppose that $q$ is $(\lambda,s)$-Lipschitz
for some $\lambda,s>0$. Define the map $g\colon V\cap X\to \OO L$ by the
formula $g(x)=[\varphi'(x),q(x)]$.

We are going to check that the map $g(x)$ is asymptotically Lipschitz. Let
$x,y\in V\cap X$ and $n-1\le d(x,y)\le n$.

Suppose that $x,y\in(V\cap X)\setminus A_n$, then $q(x)\le f_n(x)$, $q(y)\le
f_n(y)$. We have
\begin{align*}
d(g(x),g(y))=&|q(x)-q(y)|+\min\{q(x),q(y)\}d(\varphi'(x),\varphi'(y))\\
\le& \lambda d(x,y)+s+\min\{q(x),q(y)\}\Var_n\varphi'(x)\\
\le& \lambda d(x,y)+s+1.
\end{align*}

If $x\in A_n$, then $q(x)\le n$ and we obtain
\begin{align*} d(g(x),g(y))\le& \lambda d(x,y)+s+nd(\varphi'(x),\varphi'(y))\\
\le& \lambda d(x,y)+s+n\diam L\le \lambda d(x,y)+s+(d(x,y)+1)\diam L\\
\le&(\lambda+\diam L)d(x,y)+(s+\diam L).
\end{align*}
We argue similarly if $y\in A_n$.
Now, by the assumption, there is an asymptotically Lipschitz extension $\bar
g\colon X\to \OO L$ of $g$. Consider the composition $\eta_L\nu \bar g\colon \nu
X\to \OO L$. Obviously,  $\eta_L\nu \bar g|C=\varphi$.

Let $f\colon A\to \OO L$ be an asymptotically Lipschitz map defined on a proper
closed subset $A$ of a proper metric space $X$. By Proposition 2.2,
there is
a proper  asymptotically Lipschitz map $\tilde f\colon W\to \OO L$ and constants
$\lambda, s$ such that $\alpha_Lf(a)\le\lambda d(a,X\setminus W)+s$ for all
$a\in A$. Denote by
$\varphi\colon \nu X\to L$ an extension of the composition $\eta_L\nu \tilde f$.
Since
$L$ is an absolute neighborhood extensor, there exists an extension $\psi\colon
V\to L$ of $\varphi$ onto a closed neighborhood of $\nu X$ in the Higson
compactification $\bar X$. Extend $\psi$ to a map  $\hat\psi\colon(V\cap
X)\hat{}\to L$ as follows. Let
$J$ be a segment attached to $V$ with endpoints $a$ and $b$. We require that
$\hat\psi$ linearly maps $J$ onto a geodesic segment in $L$ with endpoints
$\psi(a)$
and $\psi(b)$.

We show that $\hat\psi $ is a slowly oscillating map. Since $\psi$ is slowly
oscillating, for every $\varepsilon>0$ and  $R>0$ there exists $K>0$ such that
$\Var_R\psi(x)<\varepsilon$ whenever $d(x,x_0)>K$. Suppose that $\hat\psi$ is
not slowly oscillating, then there exist $R>0$, $C>0$,  and sequences $(x_1^i)$,
$x_2^i$  in $(V\cap X)\hat{}$ such that $d(x_1^i,x_2^i)<R$,  $x_1^i\to\infty$,
$x_2^i\to\infty$ and $d(\hat\psi(x_1^i),\hat\psi(x_2^i))>C$ for every $i$. We
assume that $x_1^i\in[a_1^i,b_1^i]$, $x_2^i\in[a_2^i,b_2^i]$, for every $i$,
where $a_1^i,b_1^i,a_2^i,b_2^i\in X\cap V$.  Without loss of generality we may
assume that $a_1^i\to\infty$ and there exists $C_1>0$ such that
$d(\hat\psi(x_1^i),\hat\psi(a_1^i))>C_1$ for every $i$. If $d(a_1^i,b_1^i)<K$
for all $i$ and some $K>0$, then
$d(\hat\psi(x_1^i),\hat\psi(a_1^i))<d(\hat\psi(a_1^i),\hat\psi(b_1^i))\to0$, and
we obtain a contradiction. Therefore, we may assume that  $d(a_1^i,b_1^i)\to
\infty$. Then $d(a_1^i,x_1^i)/d(a_1^i,b_1^i)<R/d(a_1^i,b_1^i)\to0$ and
therefore, by the definition of the map $\hat\psi$,
$d(\hat\psi(x_1^i),\hat\psi(a_1^i))/d(\hat\psi(a_1^i),\hat\psi(b_1^i))\to0$.
Then obviously $d(\hat\psi(x_1^i),\hat\psi(a_1^i))\to0$ and we obtain a
contradiction.

Since the map $\tilde f$ is asymptotically Lipschitz, there exists
$K>0$ such that for any $a\in W$ we have $$\diam ( \alpha_L\tilde
f(N_1(a))+  \alpha_L\tilde f(a)\diam(\psi(N_1(a))\le K.$$ Define
the function $r\colon (X\cap V)\hat{}\to\R_+\cup\{\infty\}$ by the
formula $r(x)= K/(\psi(N_1(x)))$. We have $ f(a)\le r(a)$ for
every $a\in A$. The function $r$ is asymptotically proper and by
Proposition \ref{p:1}, there exists a $(\lambda',s')$-Lipschitz
function $\bar f\colon X\to\R_+$, for some $\lambda',s'$,  with
$\bar f\le r$ and $\bar f|A=\alpha_Lf$.

Define a map $g\colon (X\cap V)\hat{}\to\R$ by the
formula $g(x)=(\psi(x),\bar f(x))$. Obviously, $g|A=f$. We are going to show
that $g$ is a coarse uniform map.
Suppose $x,y\in X$, $d(x,y)<1$, then
\begin{align*}d(g(x), g(y))\le &|\bar f(x)-\bar f(y)|+\min\{\bar
f(x),\bar f(y)\}d(\psi(x),\psi(y)) \\ \le\lambda'+s'+K.\end{align*}

Note that, since $\bar f$ is proper, $g$ is also proper. Since $g$ is coarse
uniform, by Proposition \ref{p:geod}, $g$ is asymptotically Lipschitz.
\end{proof}
\begin{cor} {\bf (Finite Sum Theorem)} Suppose $X$ is a proper metric space,
$X=X_1\cup X_2$, where $X_1, X_2$ are closed subsets of $X$ with
$\aed X_i\le[\OO L]$, $i=1,2$, for some $L\in\LL$. Then $\aed
X\le[\OO L]$.
\end{cor}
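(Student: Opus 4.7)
The plan is to bootstrap from the main Theorem of this section, reducing the corollary to the classical Finite Sum Theorem for the extension dimension of compact Hausdorff spaces. First I would apply the equivalence (1)$\Leftrightarrow$(2) of the main Theorem to each piece: the hypothesis $\aed X_i\le[\OO L]$ translates into $\ed\nu X_i\le[L]$ for $i=1,2$.

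Next I would establish the decomposition $\nu X=\nu X_1\cup\nu X_2$ inside $\bar X$. Since $X_1,X_2$ are closed in the proper metric space $X$ and $X=X_1\cup X_2$, any point of $\nu X$ is a limit (in $\bar X$) of a net in $X$; passing to a subnet lying in some $X_i$ shows that it belongs to the closure of $X_i$ in $\bar X$, hence to the trace $X_i'$. Using the identification $X_i'=\nu X_i$ already recorded in Section~2, this yields $\nu X=\nu X_1\cup\nu X_2$ as a union of two closed subsets of a compact Hausdorff space.

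Then the classical Finite Sum Theorem for extension dimension (a standard tool of Dranishnikov's extension theory \cite{D}) applied to the compactum $\nu X$ with the covering $\{\nu X_1,\nu X_2\}$ gives $\ed\nu X\le[L]$. Finally, the implication (2)$\Rightarrow$(1) of the main Theorem converts this back to $\aed X\le[\OO L]$, which is the desired conclusion.

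The main step that requires care is the decomposition of the corona; everything else is a direct transfer through the equivalence established in the main Theorem, so the substantive asymptotic analysis has already been performed there. The decomposition itself is not really an obstacle, but it is the only place where one uses the specific geometry of the Higson compactification, and so it is worth writing out carefully.
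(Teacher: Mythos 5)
Your proposal is correct and follows exactly the paper's route: translate $\aed X_i\le[\OO L]$ into $\ed\nu X_i\le[L]$ via the main Theorem, use the decomposition $\nu X=\nu X_1\cup\nu X_2$, invoke the classical Finite Sum Theorem for extension dimension, and translate back. The paper states this in one line, so your additional care with the corona decomposition is simply a more explicit write-up of the same argument.
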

\begin{proof} Since $\nu X=\nu X_1\cup \nu X_2$, the result follows from Theorem 4.1
and the finite sum theorem for extension dimension (see \cite{BD1}).
\end{proof}

\section{Remarks and open questions}

\begin{que} Does the equality $\aed \mathbb R^n=S^n$ hold?
\end{que}

\begin{que} Let $L_1,L_2$ be finite polyhedra in euclidean spaces endowed with
the induced metric. Is the inequality $[L_1]\le[L_2]$ introduced
in \cite{D2} equivalent to the inequality $[L_1]\le[L_2]$ as in
Section 4? 
\end{que}

One can define analogue of the asymptotic extension dimension
by using warped cones instead of open cones. Following \cite{R1}
we review this construction briefly. Let $\mathcal F$ be a
foliation on a compact smooth manifold $V$. Let $N$ be any
complementary subbundle to $T\mathcal F$ in $TM$.  Choose
Euclidean metrics $g_N$ in $N$ and $g_{\mathcal F}$ in $T\mathcal
F$. The {\em foliated warped cone} ${\mathcal O}_{\mathcal F}$ is
the manifold $V \times [0,\infty)/V\times\{0\}$ equipped with the
metric induced for $t \ge 1$ by the Riemannian metric $g_R +
g_{\mathcal F} + t2g_N$. The metric structure on any bounded
neighborhood of the distinguished point is irrelevant.

\begin{que} Is the obtained warped cone an absolute neighborhood extensor in the asymptotic category?
\end{que}

An affirmative answer to this question would allow us to define
asymptotic extension dimension theory with the values in warped
cones.



\begin{thebibliography}{99}

\bibitem{BD}
G. Bell and A. N. Dranishnikov, 
{\it On asymptotic dimension of groups},
Algebraic and Geometric Topology 1 (2001),  57--71.

\bibitem{BD1} 
G. Bell and A. N. Dranishnikov, 
{\it Asymptotic dimension}, preprint on the Archives http://arxiv.org/abs/math/0703766v2

\bibitem{D2}  
A. N. Dranishnikov, 
{\it On the theory of extensions of mappings of compact spaces,} (Russian)
Uspekhi Mat. Nauk {\bf 53}:5 (1998), 65--72;
English transl. in Russian Math. Surveys {\bf 53}:5 (1998), 929--935.

\bibitem{D} A. N. Dranishnikov, 
{\it Asymptotic topology,} Russian Math. Surveys {\bf 55}(2000), No 6, 71--116.

\bibitem{DD2} 
A. Dranishnikov and J. Dydak,
{\it Extension dimension and extension types},
Trudy Mat. Inst. Steklova {\bf 212} (1996), Otobrazh. i Razmer., 61--94; 
English transl. in Proc. Steklov Inst. Math. {\bf 212}:1 (1996), 55--88.

\bibitem{DD} 
A. Dranishnikov and J. Dydak,
{\it Extension theory of separable metrizable spaces with applications to dimension theory},
Trans. Amer. Math. Soc. {\bf 353}:1 (2001) 133--156.

\bibitem{G}
M. Gromov,  
{\it Asymptotic invariants of infinite groups},
Geometric group theory, Vol. 2 (Sussex, 1991), London Math. Soc. Lecture Note Ser., vol. 182, Cambridge Univ. Press, Cambridge, 1993, pp. 1–-295.

\bibitem{M} 
P.  Mitchener, 
{\it Coarse homology theories},
Algebr. Geom. Topol. {\bf 1}  (2001), 271--297 (electronic).

\bibitem{R1} 
J. Roe, 
{\it From foliations to coarse geometry and back},
in: Analysis and geometry in foliated manifolds, (X. Masa, E. Macias-Virg\'os, J.A.Alvarez L\'opez, Editors), World Scientific, Singapore 1995, pp. 195-–206.

\bibitem{R}
J. Roe,
{\it Warped cones and property A},
Geometry $\&$ Topology {\bf 9} (2005) 163–178 
Published: 6 January 2005, Corrected: 7 March 2005 

\bibitem{S} 
M. Sawicki,
{\it Absolute extensors and absolute neighborhood extensors in asymptotic categories},
Topology Appl. {\bf 150}:1–3 (2005), 59-–78.

\end{thebibliography}
\end{document}